\theoremstyle{plain}
\newtheorem{thm}{Theorem}
\theoremstyle{definition}
\newtheorem{example}{Example}
\newcommand{\vcirc}[1]{\node[draw,circle,inner sep=.6mm,line width=.5pt] at (#1){};}
\newcommand{\vlabel}[1]{\bigg\updownarrow\rlap{#1}}
\newcommand{\meq}[2][0mm]{\raisebox{-.88mm}{\hspace{#1}\ensuremath{#2}\hspace{#1}}}
\tikzset{overcross/.style={double,line width=1.5,white,double=black,double distance=1}}
\tikzset{knotarrow/.style={line width=.6,arrows=-Latex[bend]}}
\tikzset{chordarrow/.style={arrows=-angle 60}}
\begin{document}

\title{Reidemeister Moves in Gauss Diagrams}

\author[Ganzell]{Sandy Ganzell}
\address{Department of Mathematics and Computer Science, St.\ Mary's College of Maryland, 18952 E.~ Fisher Rd., St.\ Mary's City, MD 20686}
\email{sganzell@smcm.edu}

\author[Lehet]{Ellen Lehet}
\address{100 Malloy Hall, Philosophy Department, University of Notre Dame, Notre Dame, IN, 46556}
\email{elehet@nd.edu}

\author[Lopez]{Cristina Lopez}
\address{930 E.\ 87th Pl., Los Angeles, CA 90002}
\email{cristinalopez956@gmail.com}

\author[Magallon]{Gilbert Magallon}
\address{651 Midrock Cors, Mountain View, CA, 94043}
\email{gilbertmagallon@gmail.com}

\author[Thompson]{Alyson Thompson}
\address{Department of Mathematics and Computer Science, St.\ Mary's College of Maryland, 18952 E.~ Fisher Rd., St. Mary's City, MD 20686}
\email{athompson1@smcm.edu}

\keywords{Reidemeister moves, Gauss diagram, virtual knot}
\subjclass[2020]{57K10, 57K12}
\thanks{Partially supported by NSF Grant DMS 1005046}

\begin{abstract}
We provide a simple algorithm for recognizing and performing Reidemeister moves in a Gauss diagram.
\end{abstract}
\maketitle

\section{Introduction}\label{intro}

In \cite{MR1721925}, Kauffman introduced the theory of virtual knots. Like the classical theory, virtual knot theory has a useful diagrammatic approach. Virtual knot diagrams can be thought of as closed curves in the plane with regular crossings, having extra structure at the crossings. In the classical theory, this extra structure is indicated by overcrossings and undercrossings. In the virtual theory, a third kind of crossing is allowed, namely virtual crossings, which are indicated in the diagrams with a small circle around the vertex.

Equivalence of virtual knots may be defined by means of a set of local moves (the extended Reidemeister moves) on their diagrams. Figure \ref{VRmoves} illustrates the extended Reidemeister moves: the classical Reidemeister moves R1, R2, R3, the virtual moves V1, V2, V3 and the semivirtual move SV.
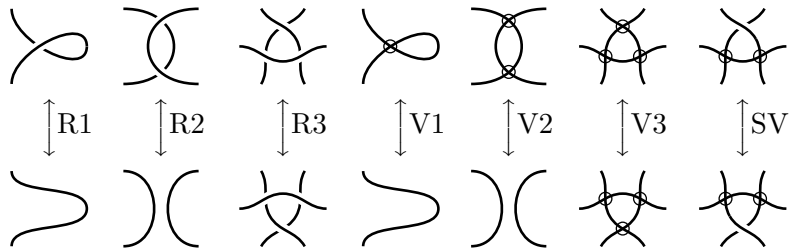
\begin{figure}[htp]
\centering
\begin{tikzpicture}[line width=1]
\matrix[ampersand replacement=\&,column sep=4mm]{
\draw(-.5,.5) to[out=-90,in=-90] (.5,0);
\draw[overcross] (.5,0) to[out=90,in=90] (-.5,-.5); \&
\draw[looseness=2.3] (-.5,-.5) to[out=0,in=0] (-.5,.5);
\draw[looseness=2.3,overcross] (.5,-.5) to[out=180,in=180] (.5,.5); \&
\draw (-120:.58) to[out=60,in=-120] (150:.2) to[out=60,in=-120] (60:.58);
\draw[rotate=-120,overcross] (-120:.58) to[out=60,in=-120] (150:.2) to[out=60,in=-120] (60:.58);
\draw[rotate=120,overcross] (-120:.58) to[out=60,in=-120] (150:.2) to[out=60,in=-120] (60:.58); \&
\draw(-.5,.5) to[out=-90,in=-90] (.5,0) to[out=90,in=90] (-.5,-.5); \vcirc{-.14,0}
\& \draw[looseness=2.3] (-.5,-.5) to[out=0,in=0] (-.5,.5);
\draw[looseness=2.3] (.5,-.5) to[out=180,in=180] (.5,.5); \vcirc{0,.34}\vcirc{0,-.34} \&
\draw (-120:.58) to[out=60,in=-120] (150:.2) to[out=60,in=-120] (60:.58);
\draw[rotate=-120] (-120:.58) to[out=60,in=-120] (150:.2) to[out=60,in=-120] (60:.58);
\draw[rotate=120] (-120:.58) to[out=60,in=-120] (150:.2) to[out=60,in=-120] (60:.58); \vcirc{90:.265}\vcirc{-150:.265}\vcirc{-30:.265}
\& \draw (-120:.58) to[out=60,in=-120] (150:.2) to[out=60,in=-120] (60:.58);
\draw[rotate=120] (-120:.58) to[out=60,in=-120] (150:.2) to[out=60,in=-120] (60:.58);
\draw[rotate=-120,overcross] (-120:.58) to[out=60,in=-120] (150:.2);\draw[rotate=-120](150:.2) to[out=60,in=-120] (60:.58);\vcirc{-150:.265}\vcirc{-30:.265}\\
\node{\vlabel{R1}}; \& \node{\vlabel{R2}}; \& \node{\vlabel{R3}}; \& \node{\vlabel{V1}}; \& \node{\vlabel{V2}}; \& \node{\vlabel{V3}}; \& \node{\vlabel{SV}};\\
\draw[looseness=.8] (-.5,-.5) to[out=90,in=-90] (.5,0) to[out=90,in=-90] (-.5,.5); \& \draw[looseness=1.4] (-.5,-.5) to[out=0,in=0] (-.5,.5);\draw[looseness=1.4] (.5,.5) to[out=180,in=180] (.5,-.5); \& \draw (-120:.58) to[out=60,in=-120] (-30:.2) to[out=60,in=-120] (60:.58);
\draw[rotate=-120,overcross] (-120:.58) to[out=60,in=-120] (-30:.2) to[out=60,in=-120] (60:.58);
\draw[rotate=120,overcross] (-120:.58) to[out=60,in=-120] (-30:.2) to[out=60,in=-120] (60:.58); \& \draw[looseness=.8] (-.5,-.5) to[out=90,in=-90] (.5,0) to[out=90,in=-90] (-.5,.5); \& \draw[looseness=1.4] (-.5,-.5) to[out=0,in=0] (-.5,.5);\draw[looseness=1.4] (.5,.5) to[out=180,in=180] (.5,-.5); \& \draw (-120:.58) to[out=60,in=-120] (-30:.2) to[out=60,in=-120] (60:.58);
\draw[rotate=-120] (-120:.58) to[out=60,in=-120] (-30:.2) to[out=60,in=-120] (60:.58);
\draw[rotate=120] (-120:.58) to[out=60,in=-120] (-30:.2) to[out=60,in=-120] (60:.58); \vcirc{-90:.265}\vcirc{150:.265}\vcirc{30:.265}
\& \draw (-120:.58) to[out=60,in=-120] (-30:.2) to[out=60,in=-120] (60:.58);
\draw[rotate=120] (-120:.58) to[out=60,in=-120] (-30:.2) to[out=60,in=-120] (60:.58);
\draw[rotate=-120] (-120:.58) to[out=60,in=-120] (-30:.2);\draw[rotate=-120,overcross](-30:.2) to[out=60,in=-120] (60:.58);\vcirc{150:.265}\vcirc{30:.265}\\
};
\end{tikzpicture}
\caption{The extended Reidemeister moves.}
\label{VRmoves}
\end{figure}
We may then define a virtual knot to be an equivalence class of virtual diagrams modulo these moves (and planar isotopy).

In \cite{MR1763963} it is proved that two classical knots are equivalent under extended Reidemeister moves if and only if they are equivalent under classical Reidemeister moves. Thus virtual knot theory may be considered a generalization of the classical theory.

Nevertheless, some of our intuition can lead us astray with diagrams that contain virtual crossings. It is important to restrict ourselves to the moves, and not consider physical movements in space.
\begin{figure}[htbp]
\centering
    \begin{tikzpicture}[scale=.6,line width=1pt]
    \draw (3,2) to[out=180,in=90] (2,0) to[out=-90,in=180] (3,-2);
    \draw (-3,-2) to[out=0,in=-90] (-2,0) to[out=90,in=0] (-3,2);
    \draw[overcross,looseness=.8] (4,0) to[out=135,in=45] (-4,0) to[out=-45,in=-135] (4,0);
    \draw (3,-2) to[out=0,in=-45] (4,0) to[out=45,in=0] (3,2);
    \draw (-3,2) to[out=180,in=135] (-4,0) to[out=-135,in=180] (-3,-2);
    \vcirc{-4,0}\vcirc{4,0}
    \end{tikzpicture}\hspace{1cm}
    \begin{tikzpicture}[scale=.6,line width=1pt]
    \draw (3,2) to[out=180,in=90] (2,0) to[out=-90,in=180] (3,-2);
    \draw (-3,-2) to[out=0,in=-90] (-2,0) to[out=90,in=0] (-3,2);
    \draw (2,1) to[out=180,in=0] (0,-.5);
    \draw (0,.5) to[out=180,in=0] (-2,-1);
    \draw[overcross] (2,1) to[out=0,in=135] (4,0) to[out=-135,in=0] (2,-1) to[out=180,in=0] (0,.5);
    \draw[overcross] (0,-.5) to[out=180,in=0] (-2,1) to[out=180,in=45] (-4,0) to[out=-45,in=180] (-2,-1);
    \draw (3,-2) to[out=0,in=-45] (4,0) to[out=45,in=0] (3,2);
    \draw (-3,2) to[out=180,in=135] (-4,0) to[out=-135,in=180] (-3,-2);
    \vcirc{-4,0}\vcirc{4,0}
    \end{tikzpicture}
\caption{Distinct virtual knots.}
\label{Kishino}
\end{figure}
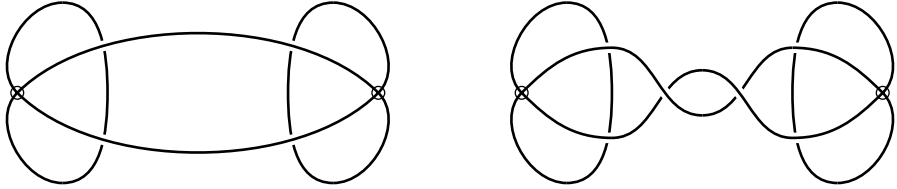
For example, the virtual knots in Figure~\ref{Kishino} are distinct, though they appear to differ only by a physical twist. These knots can be distinguished by the arrow polynomial \cite{MR2583800}.

There are two additional Reidemeister-like moves, known as the \emph{forbidden moves}, illustrated in Figure~\ref{Fmoves}. 
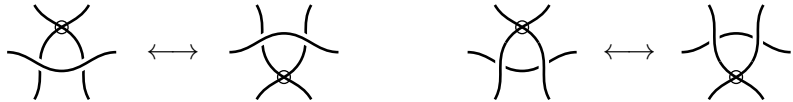
\begin{figure}[htbp]
\centering
\begin{tikzpicture}[line width=1pt,baseline,scale=1.25]
\draw (-120:.58) to[out=60,in=-120] (150:.2) to[out=60,in=-120] (60:.58);
\draw[rotate=-120] (-120:.58) to[out=60,in=-120] (150:.2) to[out=60,in=-120] (60:.58);
\draw[rotate=120,overcross] (-120:.58) to[out=60,in=-120] (150:.2) to[out=60,in=-120] (60:.58);
\vcirc{90:.265}
\end{tikzpicture}
\meq[2mm]{\longleftrightarrow}
\begin{tikzpicture}[line width=1pt,baseline,scale=1.25]
\draw (-120:.58) to[out=60,in=-120] (-30:.2) to[out=60,in=-120] (60:.58);
\draw[rotate=-120] (-120:.58) to[out=60,in=-120] (-30:.2) to[out=60,in=-120] (60:.58);
\draw[rotate=120,overcross] (-120:.58) to[out=60,in=-120] (-30:.2) to[out=60,in=-120] (60:.58);
\vcirc{-90:.265}
\end{tikzpicture}\hspace{1.5cm}
\begin{tikzpicture}[line width=1pt,baseline,scale=1.25]
\draw (150:.2) to[out=60,in=-120] (60:.58);
\draw[rotate=-120] (-120:.58) to[out=60,in=-120] (150:.2);
\draw[rotate=120] (-120:.58) to[out=60,in=-120] (150:.2) to[out=60,in=-120] (60:.58);
\draw[overcross,rotate=-120] (150:.2) to[out=60,in=-120] (60:.58);
\draw[overcross] (-120:.58) to[out=60,in=-120] (150:.2);
\vcirc{90:.265}
\end{tikzpicture}
\meq[2mm]{\longleftrightarrow}
\begin{tikzpicture}[line width=1pt,baseline,scale=1.25]
\draw (-120:.58) to[out=60,in=-120] (-30:.2);
\draw[rotate=-120] (-30:.2) to[out=60,in=-120] (60:.58);
\draw[rotate=120] (-120:.58) to[out=60,in=-120] (-30:.2) to[out=60,in=-120] (60:.58);
\draw[overcross] (-30:.2) to[out=60,in=-120] (60:.58);
\draw[rotate=-120,overcross] (-120:.58) to[out=60,in=-120] (-30:.2);
\vcirc{-90:.265}
\end{tikzpicture}
\caption{Forbidden moves.}
\label{Fmoves}
\end{figure}
Neither of these moves can be obtained as a sequence of extended Reidemeister moves. If we allow both of these moves, then any virtual knot can be transformed into any other virtual knot \cite{MR1840276,AlissaBlakeSandy}, hence the designation of these moves as forbidden. If we allow one forbidden move but not the other, we obtain what are known as \emph{welded knots}, developed by Satoh \cite{MR1758871} and Kamada \cite{MR2351010}.

\section{Gauss Diagrams and Reidemeister Moves}\label{gaussdiagrams}
One motivation for the development of virtual knots was to provide a correspondence between knots and {\em Gauss diagrams}.  Given an oriented (virtual) knot, we define its Gauss diagram as follows:  First label all the classical crossings of the knot.  Then traverse the knot, noting the sequence of crossing labels (so each label appears twice). Write this sequence counterclockwise\footnote{Either direction is acceptable, but we will always draw our diagrams counterclockwise for consistency.} around a circle.  Add a directed chord (indicated by an arrow) to the circle for each crossing, pointing from the label corresponding to the overcrossing to the label corresponding to the undercrossing.  Finally, each arrow is labeled with the sign of the crossing.
\begin{figure}[htbp]
\centering
\begin{tikzpicture}[baseline={([yshift=3pt]current bounding box)},scale=1.25,line width=1pt]
\draw[knotarrow,shorten >=3mm] (-150:1) to[out=-60,in=-120] (-30:.33) to[out=60,in=0] (90:1);
\draw (-150:1) to[out=-60,in=-120] (-30:.33) to[out=60,in=0] (90:1);
\draw (-150:.33) to[out=-60,in=-120] (-30:1);
\draw (90:.33) to[out=180,in=120] (-150:1);
\draw[overcross] (90:1) to[out=180,in=120] (-150:.33);
\draw[overcross] (-30:1) to[out=60,in=0] (90:.33);
\vcirc{0,-.475}
\path (-.6,.45) node {1} -- (.6,.45) node {2};
\end{tikzpicture}\hspace{1cm}
\begin{tikzpicture}[baseline,scale=1,line width=1pt]
\draw (0,0) circle[radius=1];
\draw[knotarrow] ([shift=(80:1)]0,0) arc (80:100:1);
\draw[chordarrow] (120:1) node[above left=-.5mm]{1} -- (-60:1) node[below right=-.5mm]{1};
\draw[chordarrow] (180:1) node[left]{2} -- (0:1) node[right]{2};
\path (.4,.15) node {$-$} -- (.05,-.5) node {$-$};
\end{tikzpicture}
\caption{Gauss diagram of a virtual trefoil.}
\label{Gauss1}
\end{figure}
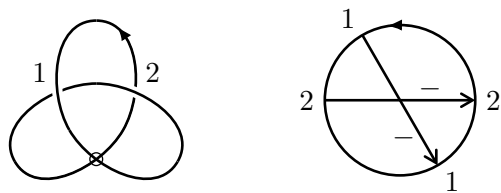
For example, Figure~\ref{Gauss1} shows the virtual trefoil knot and its corresponding Gauss diagram. The \emph{Gauss code} is simply the sequence of under and overcrossings together with the crossing signs. A Gauss code for the virtual trefoil is O1{$-$}O2{$-$}U1{$-$}U2{$-$}.

Not every Gauss diagram can be realized as the diagram of a classical knot (the diagram in Figure ~\ref{Gauss1} is an example), but every Gauss diagram {\em can} be realized as the diagram of a virtual knot \cite{MR1721925}.  It is often useful to think of virtual knots in terms of Gauss diagrams, since it avoids the pitfalls created by misinterpreting virtual knots as physical objects.
 
Note that the virtual moves V1, V2, V3 and SV have no effect on the Gauss diagram, since the sequence of classical crossings is unaffected by these moves. The classical Reidemeister moves R1, R2, R3 do change this sequence and can be reinterpreted as moves on Gauss diagrams.

Move R1 corresponds to adding or deleting a chord from the Gauss diagram whose head and tail are adjacent on the circle (i.e., no chord ends are between its head and tail). See Figure~\ref{R1Gauss}.
\begin{figure}[htbp]
\centering
\begin{tikzpicture}[line width=1]
\matrix[ampersand replacement=\&,column sep=6mm]{
\draw(-.5,.5) to[out=-90,in=-90] (.5,0);\draw[overcross] (.5,0) to[out=90,in=90] (-.5,-.5);\draw[knotarrow, shorten >=6pt] (.5,0) to[out=90,in=90] (-.5,-.5); \& 
\draw[xshift=2.5mm] (-1,.5) arc(180:315:1);\draw[chordarrow,yshift=5mm,xshift=2.5mm] (300:1) -- (195:1); \node at (.2,.1){$+$};\&[.8cm]
\draw (.5,0) to[out=90,in=90] (-.5,-.5);\draw[knotarrow, shorten >=6pt] (.5,0) to[out=90,in=90] (-.5,-.5);\draw[overcross](-.5,.5) to[out=-90,in=-90] (.5,0); \& 
\draw[xshift=2.5mm] (-1,.5) arc(180:315:1);\draw[chordarrow,yshift=5mm,xshift=2.5mm] (195:1) -- (300:1); \node at (.2,.05){$-$};\\
\node{\vlabel{R1}}; \& \node{\vlabel{R1}}; \& \node{\vlabel{R1}}; \& \node{\vlabel{R1}};\\
\draw[looseness=.8] (-.5,-.5) to[out=90,in=-90] (.5,0) to[out=90,in=-90] (-.5,.5);
\draw[looseness=.8, knotarrow, shorten >=6pt] (.5,0) to[out=-90,in=90] (-.5,-.5); \&
\draw[xshift=2.5mm](-1,.5) arc(180:315:1); \& \draw[looseness=.8] (-.5,-.5) to[out=90,in=-90] (.5,0) to[out=90,in=-90] (-.5,.5);
\draw[looseness=.8, knotarrow, shorten >=6pt] (.5,0) to[out=-90,in=90] (-.5,-.5); \&
\draw[xshift=2.5mm](-1,.5) arc(180:315:1);\\
};
\end{tikzpicture}
\caption{Reidemeister move R1 on a Gauss diagram.}
\label{R1Gauss}
\end{figure}
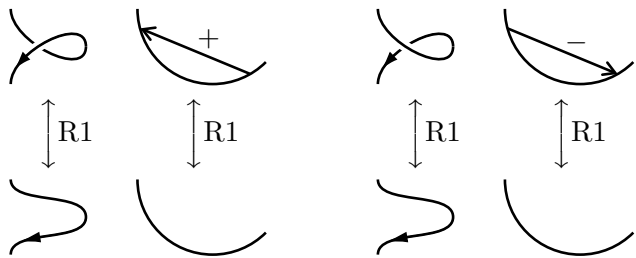
The sign and direction of the chord will depend on the direction of the twist (right-handed or left-handed) and the orientation of the knot. Specifically, changing the direction of the twist changes the sign and direction of the chord. A knot with opposite orientation would give the chord the same sign but opposite direction.

Move R2 corresponds to adding or deleting a pair of chords from the Gauss diagram that that satisfy three conditions: (1) the heads are adjacent, (2) the tails are adjacent and (3) the chords have opposite sign. See Figure~\ref{R2Gauss}.
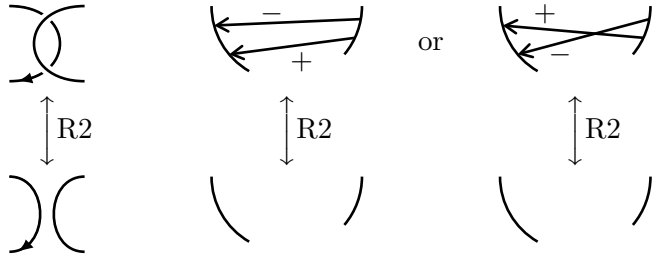
\begin{figure}[htbp]
\centering
\begin{tikzpicture}[line width=1]
\matrix[ampersand replacement=\&,column sep=6mm]{
\draw[looseness=2.3] (-.5,-.5) to[out=0,in=0] (-.5,.5);
\draw[looseness=2.3,knotarrow, shorten >=3pt] (-.5,.5) to[out=0,in=0] (-.5,-.5);
\draw[looseness=2.3,overcross] (.5,-.5) to[out=180,in=180] (.5,.5); \&[1cm] 
\draw (-1,.5) arc (180:240:1); \draw (1,.5) arc (0:-40:1); \draw[yshift=.5cm,chordarrow] (-10:1) -- (195:1); \draw[yshift=.5cm,chordarrow] (-25:1) -- (220:1); \path (-.2,.4) node{$-$} -- (.2,-.24) node{$+$}; \& \node{or}; \& \draw (-1,.5) arc (180:240:1); \draw (1,.5) arc (0:-40:1); \draw[yshift=.5cm,chordarrow] (-10:1) -- (220:1); \draw[yshift=.5cm,chordarrow] (-25:1) -- (195:1);\path (-.2,-.15) node{$-$} -- (-.4,.4) node{$+$};\\
\node{\vlabel{R2}}; \& \node{\vlabel{R2}}; \&  \& \node{\vlabel{R2}};\\
\draw[looseness=1.4] (-.5,-.5) to[out=0,in=0] (-.5,.5);
\draw[looseness=1.4,knotarrow, shorten >=3pt] (-.5,.5) to[out=0,in=0] (-.5,-.5);
\draw[looseness=1.4] (.5,-.5) to[out=180,in=180] (.5,.5); \&[1cm] \draw (-1,.5) arc (180:240:1); \draw (1,.5) arc (0:-40:1); \& \& \draw (-1,.5) arc (180:240:1); \draw (1,.5) arc (0:-40:1);\\
};
\end{tikzpicture}
\caption{Reidemeister move R2 on a Gauss diagram.}
\label{R2Gauss}
\end{figure}
The chords may or may not cross depending on the relative orientations of the two arcs of the knot. 

Move R3 is more complicated. We will use the term \emph{3-movable} to refer to a triple of chords in a Gauss diagram (or the corresponding crossings in a knot diagram) that can be repositioned by an R3 move. For example, the triple in Figure~\ref{movable}(a) is 3-movable, but the triples in Figures~\ref{movable}(b) and \ref{movable}(c) are not. Notice that a forbidden move would be required to move the ``lower'' strand in \ref{movable}(b).
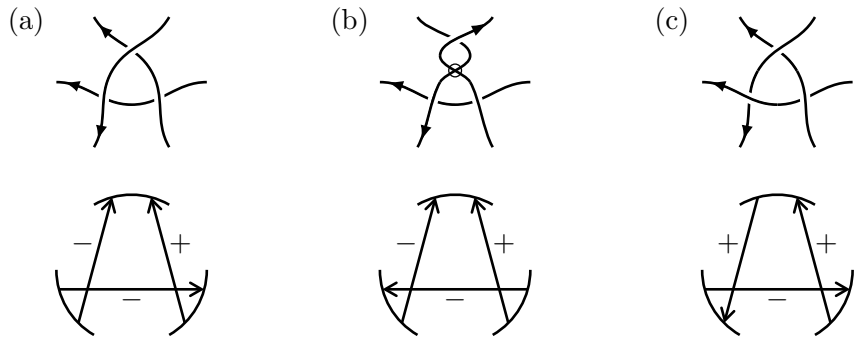
\begin{figure}[htbp]
\centering
\begin{tikzpicture}[line width=1]
\matrix[ampersand replacement=\&,column sep=1.5cm]{
\draw (1,0) to[out=180,in=0] (0,-.3) to[out=180,in=0] (-1,0); \draw[knotarrow, shorten >=3pt] (0,-.3) to[out=180,in=0] (-1,0); \draw[overcross,rotate=120] (1,0) to[out=180,in=0] (0,-.3) to[out=180,in=0] (-1,0); \draw[rotate=120, knotarrow, shorten >=3pt] (0,-.3) to[out=0,in=180] (1,0); \draw[overcross,rotate=-120] (1,0) to[out=180,in=0] (0,-.3) to[out=180,in=0] (-1,0); \draw[rotate=-120, knotarrow, shorten >=3pt] (0,-.3) to[out=0,in=180] (1,0); \node at (-1.4,.8) {(a)};\&
\draw (0:1) to[out=180,in=0] (-90:.3) to[out=180,in=0] (180:1); \draw[knotarrow,shorten >=3pt] (-90:.3) to[out=180,in=0] (180:1); \draw[looseness=.7](120:1) to[out=-60,in=90] (.2,.35) to[out=-90,in=60] (180:.2); \draw[looseness=.7,overcross] (180:.2) to[out=-120,in=60] (-120:1); \draw[knotarrow,shorten >=3pt] (180:.2) to[out=-120,in=60] (-120:1); \draw[looseness=.7,overcross] (-60:1) to[out=120,in=-60] (0:.2); \draw[looseness=.7](0:.2) to[out=120,in=-90] (-.2,.35); \draw[looseness=.7,overcross] (-.2,.35) to[out=90,in=-120] (60:1); \draw[looseness=.7,knotarrow,shorten >=3pt] (-.2,.35) to[out=90,in=-120] (60:1); \vcirc{0,.155} \node at (-1.4,.8) {(b)};\&
\draw(0:1) to[out=180,in=0] (-90:.3);\draw(150:.3) to[out=240,in=60] (240:1); \draw[knotarrow,shorten >=3pt](150:.3) to[out=240,in=60] (240:1); \draw(30:.3) to[out=120,in=-60] (120:1); \draw[knotarrow,shorten >=3pt](30:.3) to[out=120,in=-60] (120:1); \draw[overcross] (60:1) to[out=240,in=60] (150:.3); \draw[overcross] (-60:1) to[out=120,in=-60] (30:.3); \draw[overcross] (270:.3) to[out=180,in=0] (180:1); \draw[knotarrow, shorten >=3pt] (270:.3) to[out=180,in=0] (180:1); \node at (-1.4,.8) {(c)};\\[5mm]
\draw (60:1) arc(60:120:1); \draw (180:1) arc(180:240:1); \draw (300:1) arc(300:360:1); \draw[chordarrow] (195:1) -- (-15:1); \draw[chordarrow] (225:1) -- (105:1); \draw[chordarrow] (315:1) -- (75:1); \path (-.65,.35) node {$-$} -- (.65,.35) node {$+$} -- (0,-.4) node {$-$};\&
\draw (60:1) arc(60:120:1); \draw (180:1) arc(180:240:1); \draw (300:1) arc(300:360:1); \draw[chordarrow] (-15:1) -- (195:1); \draw[chordarrow] (225:1) -- (105:1); \draw[chordarrow] (315:1) -- (75:1); \path (-.65,.35) node {$-$} -- (.65,.35) node {$+$} -- (0,-.4) node {$-$};\&
\draw (60:1) arc(60:120:1); \draw (180:1) arc(180:240:1); \draw (300:1) arc(300:360:1); \draw[chordarrow] (195:1) -- (-15:1); \draw[chordarrow] (105:1) -- (225:1); \draw[chordarrow] (315:1) -- (75:1); \path (-.65,.35) node {$+$} -- (.65,.35) node {$+$} -- (0,-.4) node {$-$};\\
};
\end{tikzpicture}
\caption{Movable and nonmovable triples.}
\label{movable}
\end{figure}

A 3-movable triple must consist of three chords and three arcs of the Gauss diagram, one arc consisting of two arrowheads, one arc consisting of two arrowtails, the third arc consisting of one head and one tail of distinct chords. A triple of chords satisfying this condition is said to be \emph{matched}. The chords in Figure~\ref{movable}(a) and \ref{movable}(b) are matched, but not those in \ref{movable}(c). A second condition is necessary to guarantee that a matched triple is 3-movable. We define that condition here and then prove the correspondence with 3-movable triples.

To each chord in a matched triple we associate three numbers indicating sign, parity and direction. The sign ($\pm1$) corresponds to the sign of the crossing. The parity is assigned $+1$ if the chord crosses an even number of chords in the triple, or $-1$ for an odd number. The direction is assigned $+1$ if the chord points counterclockwise around the three arcs, or $-1$ for clockwise. For example, the horizontal chord in Figure~\ref{movable}(a) is counterclockwise ($+1$) since it points from the lower left to the lower right arc, even ($+1$) since it crosses both of the other chords, and negative ($-1$). The leftmost chord in \ref{movable}(a) is clockwise ($-1$), odd ($-1$) and negative ($-1$). Define the \emph{3-sign} of each chord to be the product of these three numbers. Note that all three chords in \ref{movable}(a) have 3-sign equal to $-1$.
\begin{thm}
A triple of chords is 3-movable if and only if it is matched and its chords all have the same 3-sign.
\end{thm}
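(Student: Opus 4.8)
The plan is to match the combinatorics of the words ``matched'' and ``3-sign'' against a careful local analysis of the Reidemeister~3 move on a knot diagram.

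For the forward implication, suppose the triple is 3-movable, so that in some diagram realizing the Gauss diagram these three crossings bound a triangular region $\Delta$ with empty interior, the sides lying on strands $a,b,c$ and the corners being $P=b\cap c$, $Q=c\cap a$, $R=a\cap b$. The first point is that the strand one slides across the opposite corner must pass entirely over, or entirely under, that crossing; if instead the moved strand were over one and under the other, the triangle move would be one of the forbidden moves of Figure~\ref{Fmoves} rather than R3 --- this is exactly what goes wrong for the ``lower'' strand of Figure~\ref{movable}(b). Assuming, say, that $a$ lies over both $b$ and $c$, I would follow the orientation of the knot and read off the chord ends along the three sides of $\Delta$: the side on $a$ carries two tails, and, according to which of $b,c$ is over at $P$, one of the remaining sides carries two heads and the other a head and a tail. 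That is precisely the matched condition. It then remains to compute, for each local picture, the triple (sign, parity, direction) of each of the three chords. A local picture is determined by finitely many bits --- whether the moved strand is over both or under both, the over/under choice at the third corner, the three orientations of $a,b,c$ relative to $\partial\Delta$, and the cyclic order in which the three sides occur along the Gauss circle --- and the crucial point is that toggling any one of these bits alters sign, parity and direction in a coordinated way: reversing the cyclic order reverses all three directions (leaving parities and signs alone); reversing one strand's orientation flips the signs and the parities of its two crossings; swapping the over/under at a corner flips both the sign and the direction of that chord; and so on. In every case the product of sign, parity and direction is therefore the same for all three chords, and I would record this in a short table with one base case done by hand.

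For the reverse implication it is enough, by the contrapositive of what was just proved, to show that a matched triple with constant 3-sign really is 3-movable --- a matched triple with non-constant 3-sign cannot arise from an R3 move, hence is not 3-movable. Here I would run the analysis backwards: from the matched data I recover the three sides of $\Delta$ together with their arrows, draw the three crossings as the corners of an honest empty triangle, and route the rest of the knot around $\Delta$ using virtual crossings, which is permissible because every Gauss diagram is realized by some virtual knot \cite{MR1721925}. What the constant-3-sign hypothesis buys is that the over/under data is forced to be acyclic with one strand genuinely over both others --- equivalently, the forbidden-move configuration of Figure~\ref{movable}(b) is excluded --- and then the triangle move on this picture is an R3 move realizing the prescribed rearrangement of chords. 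One may also phrase this as checking that the map from local R3 pictures to matched triples of constant 3-sign, constructed in the forward direction, is surjective; after using the obvious symmetries (relabeling the three chords, the mirror involution, reversing the orientation of the knot) this reduces to the same bookkeeping.

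The step I expect to be the main obstacle is organizing the forward computation so that the coordination of sign, parity and direction is transparent rather than a brute enumeration: isolating the right set of toggles, and verifying case-by-case that each toggle preserves constancy of the 3-sign. Once that is in place, both directions follow, since the reverse implication is just the contrapositive of the forward one together with the (routine, given virtual realizability) observation that each admissible local configuration can actually be drawn.
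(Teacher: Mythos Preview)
Your approach differs from the paper's, which is a pure counting argument: normalize the knot picture so the strand passing under both others is horizontal and count $2^5=32$ local 3-movable configurations; independently, normalize the Gauss picture so the two-head arc is on top and count $32$ matched triples with constant 3-sign; then verify the bijection by hand. Your plan instead fixes one base case and reaches all others via a generating set of involutions (reverse the cyclic order of the arcs, reverse one strand's orientation, toggle over/under at the third corner, toggle whether the moved strand is over or under both), checking that each preserves constancy of the 3-sign---for instance, reversing a strand flips exactly the signs and the parities of its two incident crossings while leaving directions alone, so their 3-signs are unchanged. This is more conceptual and the toggle claims are correct, though two things should be tightened. First, the sentence invoking ``the contrapositive of what was just proved'' at the start of your converse paragraph is a non sequitur: the contrapositive of the forward implication is logically the forward implication again and contributes nothing toward the converse; just delete it. Second, the substantive step in your converse---that constant 3-sign forces an \emph{acyclic} over/under pattern at the three corners, ruling out the forbidden configuration of Figure~\ref{movable}(b)---is asserted rather than argued; establishing it directly requires exactly the kind of finite check you were hoping to avoid, as you concede in your final paragraph. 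So both proofs ultimately bottom out in the same bookkeeping; yours organizes it by symmetry, the paper's by raw enumeration.
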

\begin{proof}
The necessity of being matched follows directly from the definition of an R3 move, so we will show that a matched triple is 3-movable if and only if its chords have the same 3-sign. If a triple is 3-movable, we may rotate the knot so that the strand that goes under the others is horizontal and the third crossing is above, as in the bottom of Figure~\ref{movable}(a). There are two possibilities for the third crossing, \tikz[baseline, yshift=-.4ex, scale=.35,line width=1]{\draw(0,0) -- (1,1); \draw[overcross] (1,0) -- (0,1)} or \tikz[baseline, yshift=-.4ex, scale=.35,line width=1]{\draw(1,0) -- (0,1); \draw[overcross] (0,0) -- (1,1)}. For each of those, there are 2 possibilities for orientation of each strand and two orderings of the strands as we traverse the knot, resulting in $2^5=32$ movable triples.

Conversely, if we have a matched triple with all 3-signs equal, we may rotate the Gauss diagram so that the arc with both arrowheads is at the top. There are then two possible locations for the arc with one head and one tail (left or right). That arc can have the head in one of two positions (top or bottom). The head and tail on that arc each have two possible connections (two tails and two heads). Finally, the bottom (horizontal) chord can be labeled $+$ or $-$, which will determine the signs of the other two chords (since all 3-signs must be the same). Thus there are 32 possible matched triples with equal 3-signs. The correspondence with movable triples is tedious but elementary.
\end{proof}
When a triple of chords is 3-movable, the effect of the R3 move on the Gauss diagram is to switch the two chord ends on each of the three arcs as in Figure~\ref{R3Gauss}. The resulting triple is, of course, 3-movable. It is easy to see the triple is still matched, and each chord has the same direction, sign and parity as the original.
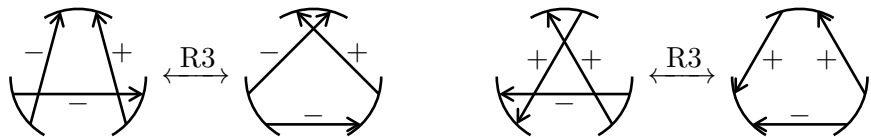
\begin{figure}[htbp]
\centering
\begin{tikzpicture}[baseline,line width=1pt,scale=.9] \draw (60:1) arc(60:120:1); \draw (180:1) arc(180:240:1); \draw (300:1) arc(300:360:1); \draw[chordarrow] (195:1) -- (-15:1); \draw[chordarrow] (225:1) -- (105:1); \draw[chordarrow] (315:1) -- (75:1); \path (-.65,.35) node {$-$} -- (.65,.35) node {$+$} -- (0,-.4) node {$-$};
\end{tikzpicture}%
\meq[.5em]{\xleftrightarrow{\mathrm{\textstyle{\hspace{1mm}R3\hspace{1mm}}}}}%
\begin{tikzpicture}[baseline,line width=1pt,scale=.9] \draw (60:1) arc(60:120:1); \draw (180:1) arc(180:240:1); \draw (300:1) arc(300:360:1); \draw[chordarrow] (225:1) -- (315:1); \draw[chordarrow] (195:1) -- (75:1); \draw[chordarrow] (-15:1) -- (105:1); \path (-.65,.35) node {$-$} -- (.65,.35) node {$+$} -- (0,-.58) node {$-$};
\end{tikzpicture}\hspace{1.5cm}%
\begin{tikzpicture}[baseline,line width=1pt,scale=.9] \draw (60:1) arc(60:120:1); \draw (180:1) arc(180:240:1); \draw (300:1) arc(300:360:1); \draw[chordarrow] (75:1) -- (225:1); \draw[chordarrow] (-15:1) -- (195:1); \draw[chordarrow] (315:1) -- (105:1); \path (-.4,.25) node {$+$} -- (.4,.25) node {$+$} -- (0,-.4) node {$-$};
\end{tikzpicture}%
\meq[.5em]{\xleftrightarrow{\mathrm{\textstyle{\hspace{1mm}R3\hspace{1mm}}}}}%
\begin{tikzpicture}[baseline,line width=1pt,scale=.9] \draw (60:1) arc(60:120:1); \draw (180:1) arc(180:240:1); \draw (300:1) arc(300:360:1); \draw[chordarrow] (105:1) -- (195:1); \draw[chordarrow] (-15:1) -- (75:1); \draw[chordarrow] (315:1) -- (225:1); \path (-.4,.25) node {$+$} -- (.4,.25) node {$+$} -- (0,-.58) node {$-$};
\end{tikzpicture}
\caption{R3 moves on Gauss diagrams.}
\label{R3Gauss}
\end{figure}

\section{Examples}
We provide two examples to illustrate the usefulness of Gauss diagrams in simplifying knots.

\begin{example}
Consider the virtual knot in Figure \ref{tough1}(a). 
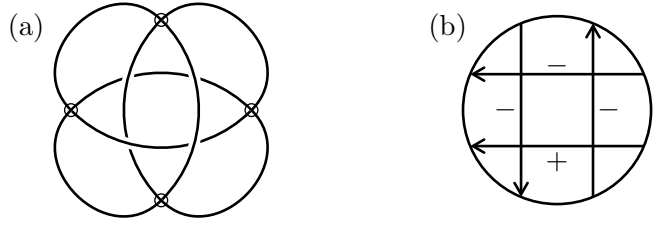
\begin{figure}[htp]
    \centering
    \raisebox{1cm}{(a)}\hspace{-5mm}
    \begin{tikzpicture}[baseline,scale=.4,line width=1]
    \draw (3,0) to[out=135,in=45] (-3,0);
    \draw[overcross] (0,3) to[out=-135,in=135] (0,-3);
    \draw[overcross,shorten >=.5pt,shorten <=.5pt] (-3,0) to[out=-45,in=-135] (3,0);
    \draw[overcross,shorten >=.5pt,shorten <=.5pt] (0,-3) to[out=45,in=-45] (0,3);
    \draw[looseness=1.5] (3,0) to[out=45,in=45] (0,3) to[out=135,in=135] (-3,0) to[out=-135,in=-135] (0,-3) to[out=-45,in=-45] (3,0);
    \vcirc{3,0}\vcirc{-3,0}\vcirc{0,3}\vcirc{0,-3}
    \end{tikzpicture}\hspace{1.5cm}
    \raisebox{1cm}{(b)}\hspace{-2mm}
    \begin{tikzpicture}[baseline,line width=1,scale=1.25]
    \draw (0,0) circle[radius=1];
    \draw[chordarrow] (112.5:1) -- (-112.5:1);
    \draw[chordarrow] (22.5:1) -- (157.5:1);
    \draw[chordarrow] (-22.5:1) -- (-157.5:1);
    \draw[chordarrow] (-67.5:1) -- (67.5:1);
    \path (.55,0) node{$-$} -- (0,.47) node{$-$} -- (-.55,0) node{$-$} -- (0,-.55) node{$+$};
    \end{tikzpicture}
    \caption{A hard-to-see unknot.}
    \label{tough1}
\end{figure}
It is difficult to see a sequence of extended Reidemeister moves to simplify the diagram; we encourage the reader to try. However, when viewed as a Gauss diagram as in \ref{tough1}(b), it is easy to see the R2 move that can be made with the two horizontal chords. Their heads and tails are both adjacent, and they have opposite sign. Thus they can be removed from the diagram, leaving two nonintersecting chords. These can be removed with R1 moves, leaving the unknot.
\end{example}
\begin{example}
Consider the knot and corresponding Gauss diagram in Figure \ref{tough2}.
\begin{figure}[htp]
    \centering
\begin{tikzpicture}[baseline,yshift=-2mm,line width=1,scale=1]
    \coordinate (A) at (-.75,-.75);
    \coordinate (B) at (.5,-1);
    \coordinate (C) at (.8,1.8);
    \coordinate (D) at (1,1.2);
    \coordinate (E) at (1,.5);
    \coordinate (F) at (-1.5,-.5);
    \draw (E) to[out=135,in=90] (F) to[out=-90,in=-135] (A);
    \draw[overcross,looseness=1.2] (A) to[out=45,in=180] (C) to[out=0,in=-45] (B) to[out=135,in=135] (D);
    \draw (D) to[out=-45,in=45] (E);
    \draw[overcross,shorten >=.5pt,shorten <=.5pt,looseness=1.5] (E) to[out=-135,in=135] (A);
    \draw (A) to[out=-45,in=-135] (B) to[out=45,in=-45] (E);
    \vcirc{A}\vcirc{B}\vcirc{E}
    \path (-.26,.96) node{\footnotesize 1} -- (-.45,.1) node{\footnotesize 2} -- (.45,.96) node{\footnotesize 3} -- (.29,-.1) node{\footnotesize 4};
\end{tikzpicture}\hspace{.5cm}
\begin{tikzpicture}[baseline,line width=1,scale=1.25]
    \draw[black!30,line width=3pt] ([shift=(60:1cm+1.5pt)]0,0) arc (60:120:1cm+1.5pt);
    \draw[black!30,line width=3pt] ([shift=(195:1cm+1.5pt)]0,0) arc (195:255:1cm+1.5pt);
    \draw[black!30,line width=3pt] ([shift=(-30:1cm+1.5pt)]0,0) arc (-30:30:1cm+1.5pt);
    \draw (0,0) circle[radius=1];
    \draw[chordarrow] (112.5:1) -- (-157.5:1);
    \draw[chordarrow] (-67.5:1) -- (157.5:1);
    \draw[chordarrow] (22.5:1) -- (-112.5:1);
    \draw[chordarrow] (-22.5:1) -- (67.5:1);
    \path (-.4,.5) node{$+$} -- (-.27,-.08) node{$-$} -- (.27,-.08) node{$+$} -- (.4,.5) node{$-$} -- (-.4,1.15) node{1} -- (-1.05,.55) node{2} -- (1.05,.55) node{3} -- (.4,1.15) node{4};
\end{tikzpicture}\vspace{-5mm}
    \caption{Setup for an R3 move.}
    \label{tough2}
\end{figure}
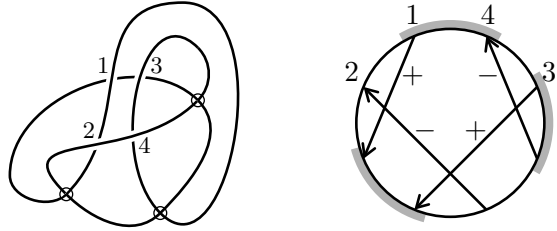
The triple of chords $(1,3,4)$ is matched (indicated by the shaded arcs) and all have positive 3-sign. Hence we may perform an R3 move to obtain the first Gauss diagram in Figure \ref{tough2a}.
\begin{figure}[htp]
    \centering
\begin{tikzpicture}[baseline,line width=1,scale=1.25]
    \draw[black!30,line width=3pt] ([shift=(60:1cm+1.5pt)]0,0) arc (60:120:1cm+1.5pt);
    \draw[black!30,line width=3pt] ([shift=(195:1cm+1.5pt)]0,0) arc (195:255:1cm+1.5pt);
    \draw[black!30,line width=3pt] ([shift=(-30:1cm+1.5pt)]0,0) arc (-30:30:1cm+1.5pt);
    \draw (0,0) circle[radius=1];
    \draw[chordarrow] (67.5:1) -- (-112.5:1);
    \draw[chordarrow] (-67.5:1) -- (157.5:1);
    \draw[chordarrow] (-22.5:1) -- (-157.5:1);
    \draw[chordarrow] (22.5:1) -- (112.5:1);
    \path (0,.35) node{$+$} -- (-.47,.12) node{$-$} -- (.45,-.21) node{$+$} -- (.6,.4) node{$-$} -- (-.4,1.15) node{4} -- (-1.05,.55) node{2} -- (1.1,-.4) node{3} -- (.4,1.15) node{1};
\end{tikzpicture}\meq[.5em]{\xleftrightarrow{\mathrm{\textstyle{\hspace{2mm}R2\hspace{2mm}}}}}
\begin{tikzpicture}[baseline,line width=1,scale=1.25]
    \draw (0,0) circle[radius=1];
    \draw[chordarrow] (67.5:1) -- (-112.5:1);
    \draw[chordarrow] (22.5:1) -- (112.5:1);
    \path (0,.35) node{$+$} -- (.6,.4) node{$-$} -- (-.4,1.15) node{4} -- (.4,1.15) node{1};
\end{tikzpicture}
    \caption{Reducing to the unknot.}
    \label{tough2a}
\end{figure}
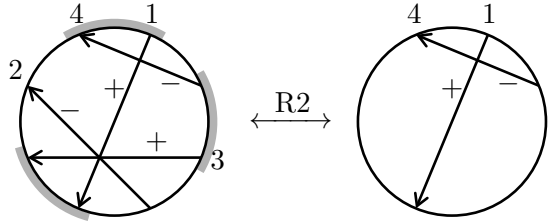
Now chords 2 and 3 are positioned for an R2 move, and can be eliminated from the diagram. Finally, chords 1 and 4 can be removed by an additional R2 move, resulting in an empty diagram, i.e., the unknot.
\end{example}

\end{document}